\documentclass{amsart}
\usepackage[utf8]{inputenc}

\usepackage[foot]{amsaddr}

\usepackage{biblatex}
\usepackage{mathtools}
\usepackage{amsmath}
\usepackage{amssymb}
\usepackage{amsthm}
\usepackage{stmaryrd}
\usepackage{hyperref}
\usepackage{cleveref}
\usepackage{listings}

\usepackage[hyphenbreaks]{breakurl}

\usepackage{xcolor}
\usepackage{dutchcal}
\usepackage{tikz-cd}
\usetikzlibrary{calc,decorations.markings,decorations.pathmorphing}
\usepackage{ifthen}
\usepackage{bm}

\theoremstyle{definition}
\newtheorem{Def}[]{Definition}[section]
\newtheorem{Pro}[Def]{Proposition}
\newtheorem{Rem}[Def]{Remark}
\newtheorem{Lem}[Def]{Lemma}
\newtheorem{Exp}[Def]{Example}
\newtheorem{Thm}[Def]{Theorem}
\newtheorem{Cor}[Def]{Corollary}
\newtheorem{Ques}[Def]{Question}

\newtheorem*{Ques*}{Question}
\newtheorem*{Thm*}{Theorem}
\newtheorem*{Cor*}{Corollary}

\DeclareMathOperator{\RHom}{\mathbb{R}\strut\kern-.2em\operatorname{Hom}}
\DeclareMathOperator{\Aut}{Aut}

\DeclareMathOperator{\Ker}{Ker}

\DeclareMathOperator{\spec}{Spec}

\DeclareMathOperator{\proj}{proj}
\DeclareMathOperator{\inc}{inc}

\newcommand{\bbone}{\text{\usefont{U}{bbold}{m}{n}1}}
\MakeRobust{\bbone}

\renewcommand{\Im}{\operatorname{Im}}

\title{Isomorphisms of graded semiconnected algebras}

\author[Darius Dramburg]{Darius Dramburg}
\address{Darius Dramburg, Kavli Institute for the Physics and Mathematics of the Universe (WPI),The University of Tokyo Institutes for Advanced Study, The University of Tokyo, Kashiwa, Chiba 277-8583, Japan}

\email{darius.dramburg@ipmu.jp}

\subjclass{16W50, 16W20}

\date{\today}

\begin{document}

\begin{abstract}
    Let $A_\bullet$ and $B_\bullet$ be nonnegatively graded algebras, finitely generated in degrees less than $2$ and with semisimple base rings $A_0$ and $B_0$. We prove that if $A \simeq B$ as ungraded algebras, then $A_\bullet \simeq B_\bullet$ as graded algebras. 
    This generalises a theorem of Bell and Zhang \cite{BellZhang} in the connected case $A_0 = k = B_0$, and the path-algebra version when $A_0$ and $B_0$ are $k$-elementary due to Gaddis \cite{Gaddis}. We also discuss some related open problems for graded algebras. 
\end{abstract}

\maketitle

\section{Introduction}
In \cite{BellZhang}, Bell and Zhang showed that for connected graded algebras $A_\bullet$ and $B_\bullet$, finitely generated in degrees $0$ and $1$, the existence of an (ungraded) isomorphism $A \simeq B$ implies the existence of a graded isomorphism $A_\bullet \simeq B_\bullet$. This result was generalised by Gaddis to the case of graded path algebras with homogeneous relations \cite{Gaddis}.
The purpose of this article is to generalise this result further by removing the assumption that the base rings $A_0$ and $B_0$ are $k$-elementary.

Throughout, we work over a fixed field $k$, and all gradings are over the integers. We will use the following terminology. 
\begin{Def}
    A graded algebra $A_\bullet = \bigoplus_{i \in \mathbb{Z}} A_i$ is called 
    \begin{enumerate}
        \item \emph{semiconnected} if $A$ is nonnegatively graded, i.e. $ A_\bullet = \bigoplus_{i = 0}^\infty A_i$, and $A_0$ is semisimple. 
        \item \emph{standard graded} if $A$ is generated by $A_0 \cup A_1$ as an algebra. 
        \item \emph{locally finite} if $\dim_k(A_i)$ is finite for all $i \in \mathbb{Z}$.  
    \end{enumerate}
\end{Def}

In \cite{GrantIyama}, a grading satisfying all three properties is called a \emph{radical grading}. Our main result is the following.

\begin{Thm}\label{Thm: Main thm}
    Let $A_\bullet$ and $B_\bullet$ be semiconnected standard graded locally finite algebras. If $A \simeq B$ as ungraded algebras, then $A_\bullet \simeq B_\bullet$ as graded algebras.
\end{Thm}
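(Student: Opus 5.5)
Given an ungraded isomorphism $\phi\colon A \to B$, the plan is to recover each grading intrinsically from the algebra structure, up to conjugation, and compare. Set $J_A = \bigoplus_{i\geq 1} A_i$. Since $A_0$ is semisimple and the grading is locally finite and standard, $A/J_A \simeq A_0$ is semisimple, and $J_A^n = \bigoplus_{i \geq n} A_i$ because $A$ is generated by $A_0 \cup A_1$. The associated graded algebra of the $J_A$-adic filtration is then $\bigoplus_n J_A^n/J_A^{n+1} \simeq A_\bullet$ as graded algebras. The strategy is therefore to show that $\phi$ can be modified so that it carries $J_A$ onto $J_B$. Once $\phi(J_A)=J_B$, we get $\phi(J_A^n)=J_B^n$, so $\phi$ induces a graded isomorphism $\operatorname{gr}_{J_A}A \simeq \operatorname{gr}_{J_B}B$, and hence $A_\bullet\simeq B_\bullet$.

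The key steps, in order, are as follows.

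\textbf{Step 1.} Characterise the ideals $J$ that arise as the augmentation ideal of some standard semiconnected grading on $A$. These are two-sided ideals $J$ with $A/J$ semisimple and finite-dimensional, with $J/J^2$ finite-dimensional, and with $\bigcap_n J^n = 0$. The last condition holds by nonnegativity: the degree-$n$ part of any element lies outside $J^{n+1}$.

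\textbf{Step 2.} Reduce to comparing $\phi(J_A)$ with $J_B$. Both $\phi(J_A)$ and $J_B$ are ideals of $B$ of finite codimension with semisimple quotient, so each is an intersection of finitely many maximal ideals of finite codimension.

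\textbf{Step 3.} Show that $\phi(J_A)=J_B$ by a counting and ``rigidity'' argument. The maximal ideals of $B$ containing $J_B$ are exactly those corresponding to the simple $B_0$-modules, pulled back along $B \to B_0$. The trivial modules of finite dimension over a standard graded algebra should be detectable intrinsically, as in Bell--Zhang. For example, consider the set of maximal ideals $\mathfrak m$ of finite codimension with $\bigcap_n \mathfrak m^n$ "large", or use the $\mathbb{G}_m$-action $t\cdot a = t^{\deg a}a$, which fixes precisely the finitely many points coming from $B_0$.

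\textbf{Main obstacle.} Step 3 is the hardest. A finite-dimensional simple $B$-module $S$ need not factor through $B_0$, and $\phi(J_A)$ could a priori be the annihilator of some other semisimple module. I would try to adapt Bell--Zhang's argument as follows. Consider the variety of $d$-dimensional representations of $B$, where $d$ ranges over the dimensions of simples of $B_0$. On this variety, the $\mathbb{G}_m$-action from the grading contracts every representation to one factoring through $B_0$, because $t\to 0$ kills positive degrees. The aim is to show that semisimple representations annihilated by an ideal $J$ with the Step 1 properties must be limit points of this kind in both gradings. It should then follow that the set of simple modules killed by $\phi(J_A)$ equals the set killed by $J_B$, giving $\phi(J_A)$ and $J_B$ the same radical. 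Since both quotients are semisimple, this forces $\phi(J_A)=J_B$. If this geometric route fails over non-algebraically-closed $k$, I would first extend scalars to $\bar k$, checking that the Step 1 conditions and isomorphism of associated graded algebras descend.
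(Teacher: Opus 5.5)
Your reduction is fine: if $\phi(J_A)=J_B$, then $\phi$ induces $A_\bullet\simeq\operatorname{gr}_{J_A}A\simeq\operatorname{gr}_{J_B}B\simeq B_\bullet$. The gap is Step 3.

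\textbf{Step 3 aims at a false statement.} As written, it tries to prove $\phi(J_A)=J_B$ for the given $\phi$. Take $A=B=k[x]$ and $\phi(x)=x+1$. Then $\phi(J_A)=(x+1)\neq(x)=J_B$, and the two ideals kill different simple modules. So no limit-point or rigidity argument can show that the two sets of simples coincide. What your reduction actually needs is an automorphism of $B$ carrying $\phi(J_A)$ to $J_B$. You never say how to produce one, and its existence is equivalent to the theorem itself.

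\textbf{Further problems.} The conditions in Step 1 do not characterise augmentation ideals. Take $J=(x^2-x)\trianglelefteq k[x]$: then $k[x]/J\simeq k\times k$ is semisimple, $J/J^2$ is finite-dimensional and $\bigcap_n J^n=0$. Yet $J$ cannot be an augmentation ideal, since $k[x]$ has no nontrivial idempotents. The missing property is that a subalgebra maps isomorphically onto $A/J$. Also, passing to $\bar k$ leaves a descent problem, because an automorphism found over $\bar k$ need not be defined over $k$.

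\textbf{The missing idea.} You should construct the modification rather than detect it; this is what the paper does. Put $I=\phi^{-1}(J_B)$.
\begin{enumerate}
\item Every nonzero ideal of the semisimple algebra $A_0$ contains a nonzero idempotent, and no nonzero idempotent lies in $B_+$. Hence $\proj_0^B\circ\phi$ restricts to an injective algebra map $A_0\to B_0$, which is bijective by symmetry and finite dimensionality. Consequently $A=A_0\oplus I$; let $\pi_I\colon A\to A_0$ be the retraction with kernel $I$.
\item The $A_0$-bimodule map $A_1\to I/I^2$, $m\mapsto m-\pi_I(m)+I^2$, is surjective because $A_0\cup A_1$ generates $A$. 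It is bijective by the count $\dim A_1\geq\dim I/I^2=\dim B_1\geq\dim \phi(J_A)/\phi(J_A)^2=\dim A_1$, where the middle inequality is the same argument run in $B$.
\item An induction on $n$, again by counting dimensions, gives $A=A_{<n}\oplus I^n$. Hence $a_1\cdots a_n\mapsto\prod_i(a_i-\pi_I(a_i))$ is well defined on $A_n$, since its value on a degree-$n$ relation lies in $A_{<n}\cap I^n=0$.
\item This yields a graded isomorphism $A_\bullet\simeq\operatorname{gr}_IA\simeq\operatorname{gr}_{J_B}B\simeq B_\bullet$.
\end{enumerate}
Equivalently, $a_0\mapsto a_0$, $m\mapsto m-\pi_I(m)$ extends to an automorphism $\theta$ of $A$ with $\theta(J_A)=I$. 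Then $\phi\circ\theta$ is exactly the modified isomorphism your reduction asks for. No representation varieties and no hypothesis on $k$ are needed.
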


In particular, this shows that while a given algebra may admit many different locally finite semiconnected standard gradings, all of them are graded isomorphic.

\begin{Cor}
    If $A$ admits a locally finite semiconnected standard grading, then the grading is unique up to graded automorphism. 
\end{Cor}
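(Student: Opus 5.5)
The plan is to deduce the corollary directly from Theorem~\ref{Thm: Main thm}, with no further structure theory. First we make the statement precise. Suppose $A$ carries two locally finite semiconnected standard gradings, $A = \bigoplus_{i \ge 0} A_i$ and $A = \bigoplus_{i \ge 0} A'_i$. Write $A_\bullet$ and $A'_\bullet$ for the resulting graded algebras. ``Unique up to graded automorphism'' will mean that there is an algebra automorphism $\varphi \in \Aut(A)$ with $\varphi(A_i) = A'_i$ for all $i$. In other words, $\varphi$ is a graded isomorphism $A_\bullet \to A'_\bullet$, and the second grading is the transport of the first along $\varphi$.

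Next we apply Theorem~\ref{Thm: Main thm} with $B_\bullet := A'_\bullet$. Both $A_\bullet$ and $A'_\bullet$ are semiconnected, standard graded and locally finite by hypothesis. The identity map $\mathrm{id}_A$ is an ungraded isomorphism between their underlying algebras. The theorem therefore gives a graded isomorphism $\varphi \colon A_\bullet \to A'_\bullet$.

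Finally we read off the conclusion. Forgetting gradings, $\varphi$ is a bijective algebra map $A \to A$, hence an element of $\Aut(A)$. Gradedness of $\varphi$ means $\varphi(A_i) \subseteq A'_i$. Equality holds because $\varphi$ is bijective and both families of subspaces decompose $A$ as a direct sum: any $a \in A'_i$ is $\varphi(b)$ for some $b = \sum_j b_j$ with $b_j \in A_j$, and comparing the $A'$-homogeneous components of $\varphi(b) = \sum_j \varphi(b_j)$ forces $b = b_i$. Hence $A'_i = \varphi(A_i)$ for all $i$, which is exactly the claim.

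No step here is a real obstacle, since all the content lies in Theorem~\ref{Thm: Main thm}. The only care needed is in the interpretation: the corollary should be read as ``any two such gradings differ by an automorphism of $A$''. It should not be read as literal equality of the gradings, which is false in general; for example, twisting by a non-graded automorphism moves $A_1$.
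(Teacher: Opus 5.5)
Your proposal is correct and follows the paper's route: the paper gives no separate proof of the corollary, treating it as the immediate consequence of Theorem~\ref{Thm: Main thm} applied to two gradings on the same algebra, with $\mathrm{id}_A$ as the ungraded isomorphism. Your check that $\varphi(A_i) = A'_i$, and not merely $\subseteq$, is sound; it is also given directly by the paper's construction, since its proof of the theorem produces $\Psi$ with $\Psi(A_i) = B_i$.
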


The above corollary is interesting especially in cases where an algebra is known to have a certain property, such as being Koszul or more generally being quadratic, but this property requires the choice of a grading. 

Our result recovers some existing results in the literature. When $A_0 = k = B_0$ is just the base field, \Cref{Thm: Main thm} recovers the result of Bell-Zhang for locally finite standard graded connected algebras. The generalisation of Gaddis to graded path algebras is recovered when $A_0$ and $B_0$ are not just semisimple but $k$-elementary, that is $A_0 \simeq k \oplus \cdots \oplus k$. A quiver for $A$ can be constructed by taking vertices as idempotents from $A_0$ and arrows as well-chosen generators of $A_1$ as an $A_0$-bimodule. The relations are then homogeneously generated in degree at least $2$.

\subsection{Sketch of proof}
Our proof follows the ideas laid out by Bell-Zhang \cite{BellZhang} and by Gaddis \cite{Gaddis}. In particular, we follow the strategy that one should study ideals of a prescribed codimension and tangent dimension. However, we avoid choosing a basis for the degree $1$ part of our algebras, i.e. the tangent bimodule, and thus circumvent the use of commutators entirely. To sketch the proof, let $A_\bullet$ and $B_\bullet$ be locally finite semiconnected standard graded, with their positive degree ideals denoted $A_+$ and $B_+$. Let $\varphi \colon A \to B$ be an isomorphism, and denote $I = \varphi^{-1}(B_+)$. We first compare the base rings and show that $A_0 \simeq B_0$ via the degree $0$ component of $\varphi$. Then, we construct an isomorphism $A_1 \simeq B_1$ that is compatible with $\varphi_0$. Since $A$ is generated by $A_0 \cup A_1$, it then suffices to extend this multiplicatively. Doing so requires factoring an element $a \in A_n$ of degree $n$ into factors of degree $1$. We do this formally by passing through the tensor algebra $\operatorname{T}_{A_0} A_1$ that surjects onto $A$, so the construction of the candidate for a graded isomorphism already contains the check that it respects relations. This check relies on an induction. To show that everything is well-defined, we need to compare the filtration of $A$ by powers of $A_+$ with that by powers of $I$.

\section{Preliminaries}
We collect some easy observations on graded rings and set up notation. For this, we let $A = \bigoplus_{i \in \mathbb{Z}} A_i $ be a graded ring, and $A_+ = \bigoplus_{i > 0} A_i$. We refer the reader to \cite{MethodsOfGradedRings} for background on graded rings and algebras.  

\begin{Rem}
    Let $A_\bullet$ be a standard graded algebra. This implies that the grading is nonnegative. For all $n \geq 0$, we have the equality of ideals
    \[ A_+^n = A_{\geq n} = \bigoplus_{i \geq n} A_i. \]
    In particular, writing $A_+^0 = A$, we have the following isomorphism of graded algebras 
    \[ A_\bullet \simeq \operatorname{gr} A_\bullet = \bigoplus_{i \geq 0} A_+^i/A_+^{i+1}.  \]
\end{Rem}

We need the following notation throughout. 

\begin{Def}
    Let $A_\bullet$ be a graded algebra. Then we denote by $\proj_i^A \colon A \to A_i$ the projection onto the homogeneous component of degree $i$. We drop the superscript $A$ when no confusion is possible. 
\end{Def}

Before we continue, we point out an alternative interpretation of our assumptions. 

\begin{Rem}
    Let $A_\bullet$ be a graded algebra. Then the grading is standard and semiconnected if and only if $A_0$ is semisimple and there exists an $A_0$-bimodule $M$ such that $A_\bullet$ is graded isomorphic to a quotient of the tensor algebra $T_{A_0} M$ by an ideal that is homogeneous with respect to tensor degrees and contained in degrees at least $2$.  
\end{Rem}

We also briefly comment on the to our knowledge uncommon usage of the word ``semiconnected''. The word has the pleasant feature of being an obvious generalisation of ``connected'' while also borrowing the ``semi'' from the defining ``semisimple'' ring $A_0$. Note that a semiconnected, locally finite, standard grading is called a \emph{radical grading} in \cite{GrantIyama}.  

\begin{Rem}
    A graded $k$-algebra $A_\bullet$ is called connected if the grading is nonnegative and $A_0 = k$. In the commutative case, connectedness is equivalent to requiring that $\operatorname{Proj}(A)$ is a connected space. For a semiconnected commutative algebra, the number of orthogonal primitive central idempotents in $A_0$ determines the number of connected components of $\operatorname{Proj}(A)$. Note that passing from $A_0 = k$ to $A_0$ being semisimple also brings with it non-reducedness because $A_0$ need not be basic. In this way, the case covered by Gaddis in \cite{Gaddis} can be seen as the noncommutative \emph{reduced} semiconnected case. 
\end{Rem}

\section{The base ring}
From now on, we assume that $A_\bullet$ and $B_\bullet$ are locally finite standard graded semiconnected algebras. We denote by $A_+ = \bigoplus_{i > 0} A_i$ and $B_+ = \bigoplus_{i > 0} B_i$ their positive degree ideals, and by $\varphi \colon A \to B$ an ungraded isomorphism.  

The following observation about idempotents holds for arbitrary graded rings. 

\begin{Lem}\label{Lem: Positive idempotents are zero}
    Let $e \in A$ be an idempotent. If $e \in A_+$, then $e = 0$. 
\end{Lem}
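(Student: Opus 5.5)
Since $e \in A_+$ and the statement is for arbitrary graded rings, I cannot assume the grading is nonnegative. So $A_+ = \bigoplus_{i>0} A_i$ need not be an ideal, but it is closed under multiplication: a product of two homogeneous elements of positive degree has positive degree. The plan is to compare lowest-degree components on the two sides of $e = e^2$.

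Suppose $e \neq 0$ and write $e = \sum_{i=m}^{M} e_i$ with $e_i \in A_i$, where $m > 0$ is the smallest degree with $e_m \neq 0$. Expanding, $e^2 = \sum_{i,j} e_i e_j$, and each product $e_ie_j$ is homogeneous of degree $i+j \geq 2m$. Hence every homogeneous component of $e^2$ in degree below $2m$ vanishes. Since $m < 2m$ and $e^2 = e$, the degree-$m$ component of $e$ must vanish, i.e. $\proj_m(e) = e_m = 0$, contradicting the choice of $m$. Therefore $e = 0$.

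An equivalent route is iteration: from $e = e^n$ we get $e \in \bigoplus_{i \geq nm} A_i$ for every $n$. Since $e$ has only finitely many nonzero components, this forces $e = 0$. There is no real obstacle here. The one point that needs care is working only with the multiplicative closure of $A_+$ and additivity of degrees, and not using any ideal property, since that is all that holds for a general $\mathbb{Z}$-graded ring.
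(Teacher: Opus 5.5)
Your proof is correct. Your ``equivalent route'' is exactly the paper's proof: there, $e = e^n \in A_{\geq n}$ for all $n \geq 1$, and because $e$ has finite support, $e^n$ and $e$ have disjoint supports once $n$ is large, which forces $e = 0$.

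Your main argument is a slightly different route. You look at the lowest nonzero component $e_m$ (with $m \geq 1$) and note that after a single squaring, $e^2$ has no component in degree $m < 2m$. The paper's argument relies on an upper bound for the support of $e$. Yours only needs a lowest nonzero degree to exist. So your argument would also work for elements with infinite support, for instance in the completion $\prod_{i \geq 0} A_i$ of a nonnegatively graded algebra (power series without constant term). For the lemma as stated, the two arguments are equally good. You are also right to avoid treating $A_+$ as an ideal in a $\mathbb{Z}$-graded ring; both arguments only use $A_i A_j \subseteq A_{i+j}$.
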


\begin{proof}
    By assumption, we have $e \in A_+$ and hence $e^n \in A_{\geq n}$ for all $n \geq 1$. However, since every element in a graded algebra has finite support, it follows that $e^m$ and $e$ have disjoint support for large enough $m$. Since $e^m = e $, our claim follows. 
\end{proof}

\begin{Def}
    For the isomorphism $\varphi \colon A \to B$, we denote the degree $0$ component as 
    \[\varphi_0 \colon A_0 \to B_0, a \mapsto \proj_0^B(\varphi(a)). \]
\end{Def}

\begin{Pro}\label{Pro: phi_0 is iso}
    The map $\varphi_0$ is an isomorphism of algebras.
\end{Pro}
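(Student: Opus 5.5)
The plan is to exhibit $\varphi_0$ as a composite of algebra homomorphisms, prove injectivity with \Cref{Lem: Positive idempotents are zero}, and then obtain surjectivity from a dimension count using local finiteness.

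First, $\varphi_0$ is a unital algebra homomorphism. Since $B_\bullet$ is nonnegatively graded, $B_+$ is a two-sided ideal and $B = B_0 \oplus B_+$. Hence $\proj_0^B \colon B \to B_0$ is the quotient map $B \to B/B_+ \simeq B_0$, which is a unital algebra homomorphism. So $\varphi_0$ equals the composite
\[ A_0 \hookrightarrow A \xrightarrow{\varphi} B \xrightarrow{\proj_0^B} B_0, \]
and each factor is a unital algebra homomorphism.

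For injectivity, let $K = \Ker(\varphi_0) = A_0 \cap \varphi^{-1}(B_+)$. This is a two-sided ideal of $A_0$. Since $A_0$ is semisimple, every two-sided ideal is generated by a central idempotent, so $K = A_0 e$ for some central idempotent $e \in A_0$. Then $\varphi(e)$ is an idempotent of $B$, and it lies in $B_+$ because $e \in K$. By \Cref{Lem: Positive idempotents are zero}, applied to $B_\bullet$, we get $\varphi(e) = 0$. Since $\varphi$ is injective, $e = 0$, and therefore $K = 0$.

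For surjectivity, apply the same argument to $\varphi^{-1} \colon B \to A$. This shows that $(\varphi^{-1})_0 \colon B_0 \to A_0$ is an injective algebra homomorphism. By local finiteness, $A_0$ and $B_0$ are finite-dimensional, and the two injective linear maps give $\dim_k A_0 \le \dim_k B_0 \le \dim_k A_0$. So the dimensions are equal, and the injective linear map $\varphi_0$ is bijective, hence an isomorphism of algebras.

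The only step needing care is the reduction of the kernel to an idempotent, which uses semisimplicity of $A_0$. We cannot instead argue that $A_+$ is the Jacobson radical: that fails, for instance, for $k[x]$ with its standard grading.
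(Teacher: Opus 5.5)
Your proof is correct and follows the paper's argument essentially step for step. You write $\varphi_0 = \proj_0^B \circ \varphi$ as an algebra homomorphism, note that a nonzero kernel would contain a nonzero idempotent that \Cref{Lem: Positive idempotents are zero} forces to vanish, and get surjectivity from the symmetric argument for $\varphi^{-1}$ plus a dimension count. Your remark that every ideal of a semisimple algebra is $A_0e$ for a central idempotent $e$ is just a cleaner phrasing of the paper's Artin--Wedderburn step.
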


\begin{proof}
    Since $B_0 \simeq B/B_+$, the map $\proj_0^B$ is an algebra homomorphism, and its kernel is $\Ker(\proj_0^B) = B_+$. 
    Therefore, $\varphi_0 = \proj_0^B \circ \; \varphi$ is an algebra homomorphism, which means that $\Ker(\varphi_0) \subseteq A_0$ is an ideal. We show that $\Ker(\varphi_0) = 0$. By the Artin-Wedderburn theorem, the semisimple algebra $A_0$ decomposes as $ A_0 \simeq \Pi_{i = 1}^n D_i^{m_i \times m_i}$ for division algebras $D_i$ over $k$. Since each matrix algebra $D_i^{m_i \times m_i}$ has only trivial ideals, a non-trivial ideal in $A_0$ needs to contain an ideal isomorphic to one of the $D_i^{m_i \times m_i}$, which means that a non-trivial ideal in $A_0$ contains a non-trivial idempotent. However, because $\varphi$ is injective, we have that every idempotent $e \in \Ker(\varphi_0)$ must satisfy $\varphi(e) \in B_+$. By \Cref{Lem: Positive idempotents are zero}, it follows that $\varphi(e) = 0$, and hence $e=0$. Thus we conclude that $\Ker(\varphi_0)= 0$. 
    We then reverse the roles of $A$ and $B$ via $\varphi^{-1}$ to see that $B_0$ also injects into $A_0$. Since $A_0$ and $B_0$ are finite dimensional, it follows that $\dim(A_0) = \dim(B_0)$, and hence $\varphi_0$ is an isomorphism.  
\end{proof}

\section{Split ideals and their tangent bimodules}\label{Sec: Split and tangent}
With the same assumptions on $A$ and $B$, we now consider $I = \varphi^{-1}(B_+) \trianglelefteq A$. The goal is to show that $I$ and $A_+$ behave similarly. The first steps are all internal to $A$, so we consider an arbitrary ideal $J \trianglelefteq A$ and we write $\proj_{A/J} \colon A \to A/J$ for the canonical projection and $\inc \colon A_0 \to A$ for the inclusion morphism. 

\begin{Def}
    A (possibly ungraded) ideal $J \trianglelefteq A$ is \emph{split} (with respect to the fixed grading $A_\bullet$) if the composition 
    \[ A_0 \xrightarrow[]{\inc} A \xrightarrow{\proj_{A/J}} A/J \]
    is an isomorphism. In this case, we denote the retraction $A \to A_0$ by  
    \[ \pi_J = (\proj_{A/J} \circ \inc)^{-1} \circ \proj_{A/J}. \]
\end{Def}

In the terms of \cite{BellZhang, Gaddis}, a split ideal $J$ is an ideal of codimension $\operatorname{dim}(A_0)$. 

\begin{Rem}
    Calling $\pi_J$ a retraction is justified: We have that $\pi_J$ restricts to the identity on $A_0$ and $\Ker(\pi_J) = J$. In particular, for a split ideal $J$ we have that $A = A_0 \oplus J$. We chose the above definition in favor of the equivalent characterization via $\pi_J$ because it makes the following more clear. The ideal $A_+$ gives an obvious identity composition 
    \[ A_0 \xrightarrow[]{\inc} A \xrightarrow{\proj_{A/A_+}} A/A_+, \]
    and we are simply substituting $A_+$ with any ideal $J$ for which this stays an isomorphism. 
\end{Rem}

\begin{Def}
    For a split ideal $J \trianglelefteq A$, we denote by $\pi_{J,1} = \pi_J|_{A_1}$ the restriction  of $\pi_J$ to $A_1$, and we fix $\psi_J = \operatorname{id}_{A_1} - \; \pi_{J,1} $ as a map $A_1 \to A$. 
\end{Def}

The following observations only require linear algebra. 

\begin{Pro}\label{Pro: Split ideal properties}
    For any split ideal $J \trianglelefteq A$ we have the following: 
    \begin{enumerate}
        \item The map $\pi_{J,1}$ is an $A_0$-bimodule morphism. 
        \item The map $\psi_J$ is an injective $A_0$-bimodule morphism. 
        \item The image of $\psi_J$ satisfies $\Im(\psi_J) \subseteq J$. 
    \end{enumerate}
\end{Pro}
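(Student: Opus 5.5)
The plan is to read everything off the fact that $\pi_J$ is a retraction onto $A_0$ obtained by composing algebra maps. The map $\proj_{A/J}$ is an algebra homomorphism, and so is $\inc$. Hence $\proj_{A/J} \circ \inc \colon A_0 \to A/J$ is an algebra isomorphism, and its inverse is again an algebra homomorphism. Therefore $\pi_J \colon A \to A_0$ is an algebra homomorphism that restricts to the identity on $A_0$ and has kernel $J$.

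For (1), take $a, a' \in A_0$ and $x \in A_1$. Multiplicativity gives $\pi_J(a x a') = \pi_J(a)\pi_J(x)\pi_J(a') = a\,\pi_J(x)\,a'$, using $\pi_J|_{A_0} = \operatorname{id}$. Since $A_1$ is an $A_0$-sub-bimodule of $A$ (the grading is nonnegative, so $A_0 A_1 A_0 \subseteq A_1$), the restriction $\pi_{J,1}$ is an $A_0$-bimodule morphism $A_1 \to A_0 \subseteq A$. Linearity is clear.

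For (2), $\psi_J = \operatorname{id}_{A_1} - \pi_{J,1}$ is a difference of $A_0$-bimodule maps $A_1 \to A$, so it is a bimodule map. For injectivity, suppose $\psi_J(x) = 0$ for some $x \in A_1$. Then $x = \pi_J(x) \in A_0$, so $x \in A_0 \cap A_1 = 0$. Equivalently, apply $\proj_1$: the degree-one component of $\psi_J(x)$ is $x$ itself, because $\pi_J(x)$ lies in degree $0$.

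For (3), $\pi_J$ is idempotent as a map $A \to A$ with image $A_0$ on which it is the identity. Hence $\pi_J(\psi_J(x)) = \pi_J(x) - \pi_J(\pi_J(x)) = 0$, so $\psi_J(x) \in \Ker(\pi_J) = J$.

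There is no real obstacle. The only point needing care is to state explicitly that $\pi_J$ is multiplicative and that $\Ker(\pi_J) = J$, which follow from the definition as noted in the preceding remark.
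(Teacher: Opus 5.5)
Your proposal is correct and follows the paper's proof essentially step for step. Both use the multiplicativity of $\pi_J$ together with $\pi_J|_{A_0} = \operatorname{id}$ for (1), the difference of bimodule maps and $x = \pi_J(x) \in A_0 \cap A_1 = 0$ for (2), and $\pi_J \circ \psi_J = 0$ with $J = \Ker(\pi_J)$ for (3); the only small remark is that $A_0 A_1 A_0 \subseteq A_1$ holds for any grading, so you do not need nonnegativity there.
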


\begin{proof}\leavevmode
    \begin{enumerate}
        \item Recall that $\pi_J$ is a ring homomorphism which restricts to the identity on $A_0$. Therefore, we have for $a, a' \in A_0$ and $m \in A_1$ that 
        \begin{align*}
            \pi_{J,1} ( a m a') &= \pi_J(ama') = \pi_J(a) \pi_J(m) \pi_J(a')\\ &= a \pi_J(m) a' = a \pi_{J,1}(m) a'.
        \end{align*} 
        \item The map $\psi_J$ is a bimodule morphism because it is the difference of two bimodule morphisms. To see that it is injective, suppose $\psi_J(m) = 0$ for $m \in A_1$. Then we have 
        \[ 0 = \psi_J(m) = m - \pi_{J}(m), \]
        but $m \in A_1$ and $\pi_J(m) \in A_0$, and hence $m=0$.
        \item To see that $\Im(\psi_J) \subseteq J$, recall that $J = \Ker(\pi_J)$ and apply $\pi_J$ to $\Im(\psi_J)$. For $m \in A_1$, we have
        \[ \pi_J(\psi_J(m)) = \pi_J(m - \pi_J(m)) = \pi_J(m) - \pi_J(m) = 0. \qedhere \]
    \end{enumerate}
\end{proof}

The next property of $\psi_J$ is less obvious. 

\begin{Pro}\label{Pro: psi is epi onto J/J^2}
    With the same assumptions, we have that 
    \[ \overline{\psi_J} \colon A_1 \to J/J^2, m \mapsto \psi_J(m) + J^2 \]
    is an epimorphism of $A_0$-bimodules. 
\end{Pro}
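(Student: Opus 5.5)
Well-definedness and bimodule linearity are immediate from \Cref{Pro: Split ideal properties}: $\psi_J$ is an $A_0$-bimodule map landing in $J$, and $J/J^2$ is an $A_0$-bimodule via $A_0 \subseteq A$. So the content of the statement is surjectivity, namely $J = \psi_J(A_1) + J^2$. The plan is to show that $J$ is generated, as a two-sided ideal, by $\Im(\psi_J)$, and then reduce any product modulo $J^2$ to an element of $\Im(\psi_J)$.

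\textbf{Step 1: $J$ is generated by $\Im(\psi_J)$.} Let $J' \trianglelefteq A$ be the ideal generated by $\Im(\psi_J)$; then $J' \subseteq J$. Every element of $A_1$ is congruent modulo $J'$ to an element of $A_0$, since $m = \psi_J(m) + \pi_J(m)$. Because $A$ is generated by $A_0 \cup A_1$ and $A_0$ is a subalgebra, every monomial in $A_0 \cup A_1$ is then congruent modulo $J'$ to an element of $A_0$. Hence $A = A_0 + J'$. Combining this with $A = A_0 \oplus J$ and $J' \subseteq J$ gives, for $x \in J$, a decomposition $x = a + j'$ with $a \in A_0$ and $j' \in J'$. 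Then $a = x - j' \in A_0 \cap J = 0$, so $x \in J'$ and $J = J'$.

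\textbf{Step 2: reduction modulo $J^2$.} By Step 1, a general element of $J$ is a sum of terms $x\,\psi_J(m)\,y$ with $x, y \in A$ and $m \in A_1$. Decompose $x = \pi_J(x) + (x - \pi_J(x))$ with the second summand in $J$, and do the same for $y$. Since $\psi_J(m) \in J$, every term involving $x - \pi_J(x)$ or $y - \pi_J(y)$ lies in $J^2$. So modulo $J^2$ we have
\[ x\,\psi_J(m)\,y \equiv \pi_J(x)\,\psi_J(m)\,\pi_J(y) = \psi_J\bigl(\pi_J(x)\, m\, \pi_J(y)\bigr), \]
where the equality uses that $\psi_J$ is an $A_0$-bimodule morphism. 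The right-hand side lies in $\Im(\psi_J)$, which proves surjectivity.

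The main obstacle is Step 1, and in particular the verification that monomials reduce to $A_0$ modulo $J'$. This is where the standard-graded hypothesis is used, and where one must be careful that $A_0 + J'$ is closed under multiplication (it is, since $J'$ is an ideal and $A_0$ is a subalgebra). Local finiteness and semisimplicity of $A_0$ do not appear to be needed for this statement.
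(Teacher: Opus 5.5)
Your proposal is correct and follows the paper's proof essentially step for step. You show that the ideal generated by $\Im(\psi_J)$ equals $J$ via $A = A_0 + J'$ and $A_0 \cap J = 0$, then reduce $x\,\psi_J(m)\,y$ to $\psi_J(\pi_J(x)\, m\, \pi_J(y))$ modulo $J^2$, and your explicit modular-law argument for $J = J'$ is just a more detailed version of the paper's ``two complements'' remark.
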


\begin{proof} Our proof consists of three steps. 

\emph{Step 1.} We show that the ideal generated by the image of $\psi_J$ equals $J$: Let $J' = A\psi(A_1)A$. By the previous proposition, we know that $J' \subseteq J$. We have that $A_0 + J' = \{ a + j \mid a \in A_0, j \in J'\}$ is a subalgebra of $A$ because $J'$ is a two-sided ideal. Next, note that because every $m \in A_1$ can be written as 
    \[  m = \pi_J(m) + \psi_J(m) \in A_0 + J' \]
    the sum $A_0 + J'$ contains $A_0 \cup A_1$, and hence $A_0 + J' = A$. Furthermore, this is a direct sum, because $A_0 \cap J' \subseteq A_0 \cap J = 0$. 
    Thus, we have now found two complements of $A_0$ in $A$, i.e. we have 
    \[ A = A_0 \oplus J = A_0 \oplus J'. \]
    Together with the inclusion $J' \subseteq J$, it follows that $J = J'$.  

    \emph{Step 2.} We now know that $J$ is spanned by elements of the form $a \psi_J(m) a'$ for $m \in A_1$ and $a, a' \in A$. We claim that $a \psi_J(m) a'$ is congruent to $\pi_J(a) \psi_J(m) \pi_J(a')$ modulo $J^2$: 
    To see this, first note that taking the $J$-part of $a$ and $a'$ gives us elements $b = a - \pi_J(a)$ and $b' = a' - \pi_J(a')$ which lie in $J$. These terms appear when computing the difference
    \begin{align*}
        a \psi_J(m) a' - \pi_J(a) \psi_J(m) \pi_J(a') &= \pi_J(a) \psi_J(m) b' + b \psi_J(m) \pi_J(a') + b \psi_J(m) b',  
    \end{align*}
    and because they lie in $J$ we see that this difference is contained in $J^2 + J^2 + J^3 \subseteq J^2$. 
    Thus, we have that 
    \[ a \psi_J(m) a' + J^2 =  \pi_J(a) \psi_J(m) \pi_J(a') + J^2 \]
    as claimed. 
    
    \emph{Step 3.} We show that $\overline{\psi_J}$ is surjective: By the first step, we know that $J = J'$, so $J$ is generated by elements of the form $a \psi(m) a'$ for $a, a' \in A$ and $m \in A_1$. By the second step, we know that when passing to $J/J^2$, we may replace each $a \psi(m) a'$ with $\pi_J(a) \psi(m) \pi_J(a')$. It remains to note that this term is nothing but 
    \[ \pi_J(a) \psi_J(m) \pi_J(a') = \psi_J(\pi_J(a) m \pi_J(a')), \]
    and $\pi_J(a) m \pi_J(a') \in A_1$ because $\pi_J(a), \pi_J(a') \in A_0$. Thus, 
    \[ \overline{\psi_J}(\pi_J(a) m \pi_J(a')) + J^2 = a \psi_J(m) a' + J^2,   \]
    and we conclude that $\overline{\psi_J}$ is surjective. \qedhere
\end{proof}

\begin{Rem}
\begin{enumerate}
    \item In the terminology of \cite{BellZhang, Gaddis}, the tangent dimension of $J$ is $\dim(J/J^2)$, or a suitable refinement of this dimension. The above therefore states that $J$ has at most tangent dimension $\dim(A_1)$.
    \item So far, all computations happened inside $A$, and we do not know whether $\overline{\psi_J}$ has to be injective as well, i.e. we do not have equality of tangent dimensions for $A_+$ and $J$. To achieve this, we have to consider the case $J = I = \varphi^{-1}(B_+)$, and use that $\varphi$ is an isomorphism to play the argument backwards to prove injectivity of $\overline{\psi_I}$.
\end{enumerate}     
\end{Rem} 

From now on, we consider $J = I = \varphi^{-1}(B_+)$. We also need the notation $K = \varphi(A_+) \trianglelefteq B$. 

\begin{Pro}\label{Pro: I and K are split}
    Let $A_\bullet$ and $B_\bullet$ be semiconnected standard graded locally finite algebras and $\varphi \colon A \to B$ an ungraded isomorphism. Then $I = \varphi^{-1}(B_+)$ and $K =\varphi(A_+)$ are split in $A$ respectively $B$. 
    Furthermore, the map $\overline{\psi_I}$ is an $A_0$-bimodule isomorphism $A_1 \to I/I^2$, and $\overline{\psi_K}$ is a $B_0$-bimodule isomorphism $B_1 \to K/K^2$.  
\end{Pro}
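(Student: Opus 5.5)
We plan to prove the statement in two parts: first that $I$ and $K$ are split, then that the tangent maps are injective, using a dimension count that passes back and forth through $\varphi$.

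\emph{Splitness.} The composition $A_0 \to A \to A/I$ is identified, via $\varphi$, with
\[ A_0 \xrightarrow{\varphi} B \to B/B_+ \simeq B_0, \]
which is exactly $\varphi_0$. By \Cref{Pro: phi_0 is iso}, $\varphi_0$ is an isomorphism. Hence $\proj_{A/I}\circ\inc$ is an isomorphism and $I$ is split. The same argument applied to $\varphi^{-1}$, using $(\varphi^{-1})_0$, shows that $B_0 \to B/\varphi(A_+)$ is an isomorphism, so $K$ is split in $B$.

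\emph{Injectivity of $\overline{\psi_I}$ and $\overline{\psi_K}$.} By \Cref{Pro: psi is epi onto J/J^2}, $\overline{\psi_I}\colon A_1 \to I/I^2$ and $\overline{\psi_K}\colon B_1 \to K/K^2$ are surjective bimodule maps. So it suffices to compare the dimensions, which are finite by local finiteness. The isomorphism $\varphi$ maps $I$ onto $B_+$ and $I^2$ onto $B_+^2$, so
\[ I/I^2 \simeq B_+/B_+^2 = B_1 \]
as vector spaces, using that $B_+^2 = B_{\geq 2}$ by standardness. Likewise, $\varphi^{-1}$ maps $K$ onto $A_+$, giving $K/K^2 \simeq A_+/A_+^2 = A_1$. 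Surjectivity then yields
\[ \dim A_1 \geq \dim I/I^2 = \dim B_1 \geq \dim K/K^2 = \dim A_1. \]
All these inequalities are therefore equalities. A surjective linear map between finite-dimensional spaces of equal dimension is bijective, so $\overline{\psi_I}$ and $\overline{\psi_K}$ are bijective. Since they are bimodule morphisms, they are bimodule isomorphisms.

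The only delicate point is making sure the dimensions are finite: $A_1$ and $B_1$ are finite-dimensional by local finiteness, and the quotients are identified with them via $\varphi$. Apart from that, the key insight is the symmetric use of $I$ and $K$ to close the chain of inequalities. Neither surjection alone gives injectivity; it is the combination of both that does.
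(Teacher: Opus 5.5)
Your proposal is correct and follows the paper's proof. Injectivity comes from the same closing dimension chain $\dim A_1 \geq \dim I/I^2 = \dim B_1 \geq \dim K/K^2 = \dim A_1$. For splitness you are slightly more economical: you identify $\proj_{A/I}\circ\inc$ with $\varphi_0$ via the isomorphism $A/I \simeq B/B_+ \simeq B_0$ induced by $\varphi$, whereas the paper checks injectivity separately with the idempotent argument and surjectivity separately via $\varphi_0$.
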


\begin{proof}
    We only show the statement for $A$ and $\overline{\psi_I}$, the proof for $B$ is the same. 
    The fact that $A_0 \to A \to A/I$ is injective follows as before: If the kernel of $A_0 \to A \to A/I$ is non-trivial, it contains a non-trivial idempotent $e$, but then $\varphi(e) \in B_+$ and hence $e = 0$. To see surjectivity, take an arbitrary $a + I \in A/I$, and take $b \in A_0$ such that $\varphi_0(b) = \proj_0^B(\varphi(a))$, which exists because $\varphi_0$ is an isomorphism $A_0 \to B_0$ by \Cref{Pro: phi_0 is iso}. Then $\varphi(b -a) \in B_+$, and hence $a+I = b+I$. Thus the composition $A_0 \to A \to A/I$ is bijective. 
    
    To see that $\overline{\psi_I}$ is an isomorphism, we compare $\dim(A_1)$ and $\dim(I/I^2)$. We already know that $\overline{\psi_I}$ is surjective, so it suffices to show that $\dim(A_1) = \dim(I/I^2)$. Using that $\varphi$ is an isomorphism, we see that $\dim(I/I^2) = \dim(B_+/B_+^2) = \dim(B_1)$ and $\dim(K/K^2) = \dim(A_+/A_+^2) = \dim(A_1)$. Using the surjections $\overline{\psi_I}$ and $\overline{\psi_K}$, we have 
    \[ \dim(A_1) \geq \dim(B_+/B_+^2) = \dim(B_1) \geq \dim(A_+/A_+^2) = \dim(A_1). \qedhere \] 
\end{proof}

\section{Constructing the isomorphism}
We now have all necessary components to build our graded isomorphism. We use $\varphi_0$ in degree $0$ and we use $ A_1 \xrightarrow{ \overline{\psi_I}} I/I^2 \to B_1$ in degree $1$. Since $A$ is generated by $A_0 \cup A_1$ we would like to extend this multiplicatively in a direct fashion. However, to write down this multiplicative extension in degree $n$, we need to factor degree $n$ elements $a \in A_n$ into $n$ factors of degree $1$. We do this by passing through the tensor algebra $\operatorname{T}_{A_0}(A_1)$. As before, we first work only with $A$. 

\begin{Rem}
    For $n \geq 1$, we fix notation for the $n$-fold tensor product 
    \[A_1^{(n)} = A_1 \otimes_{A_0} \cdots \otimes_{A_0} A_1. \]
    We then have that the multiplication map $\mu_n \colon A_1^{(n)} \to A_n$ is surjective because $A$ is generated by $A_0 \cup A_1$. 
\end{Rem}

\begin{Def}
    With the standing assumptions on $\varphi \colon A \to B$, we fix the notation 
    \[ \psi^{(n)} \colon A_1^{(n)} \to I^n, (a_1 \otimes \cdots \otimes a_n) \mapsto \psi_I(a_1) \cdots \psi_I(a_n). \]
    We furthermore define $ \psi^{(0)} \colon A_0 \to A, a \mapsto a$.
\end{Def}

We first show that the highest degree part of $\psi^{(n)}$ is what we expect. We use the notation $A_{< n} = \bigoplus_{0 \leq i < n} A_i$ and $A_{\leq n} = \bigoplus_{0 \leq i \leq n} A_i$ in the following. Note that these are only $A_0$-subbimodules of $A$. 

\begin{Lem}\label{Lem: psi product top degree}
    With the above setup, we have for $n \geq 1$ and $a_1, \ldots, a_n \in A_1$ that 
    \[ \psi_I(a_1) \cdots \psi_I(a_n) - a_1 \cdots a_n \in A_{< n}. \]
\end{Lem}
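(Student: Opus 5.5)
The plan is to write each factor $\psi_I(a_j) = a_j - \pi_I(a_j)$, where $\pi_I(a_j) \in A_0$ and $a_j \in A_1$, and then expand the product by multilinearity. The degree bookkeeping that the whole argument rests on is that $A_{\le m}$ is an $A_0$-subbimodule and that $A_{\le m} A_{\le m'} \subseteq A_{\le m+m'}$, since the grading is nonnegative. I would set this up as an induction on $n$, which keeps the expansion manageable.

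For $n = 1$ the claim is immediate: $\psi_I(a_1) - a_1 = -\pi_I(a_1) \in A_0 = A_{<1}$.

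For the inductive step, assume $\psi_I(a_1)\cdots\psi_I(a_{n-1}) = a_1\cdots a_{n-1} + r$ with $r \in A_{<n-1}$, i.e.\ $r \in A_{\le n-2}$. Multiplying on the right by $\psi_I(a_n) = a_n - \pi_I(a_n)$ gives
\[ \psi_I(a_1)\cdots\psi_I(a_n) = a_1\cdots a_n - a_1\cdots a_{n-1}\pi_I(a_n) + r a_n - r\pi_I(a_n). \]
The three error terms are handled separately:
\begin{itemize}
\item $a_1\cdots a_{n-1}\pi_I(a_n) \in A_{n-1}A_0 \subseteq A_{n-1}$;
\item $r a_n \in A_{\le n-2}A_1 \subseteq A_{\le n-1}$;
\item $r\pi_I(a_n) \in A_{\le n-2}A_0 \subseteq A_{\le n-2}$.
\end{itemize}
All three therefore lie in $A_{\le n-1} = A_{<n}$, which completes the induction.

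Nothing here is a genuine obstacle. The only point needing care is to use that $\pi_I(a_j)$ really lies in $A_0$, which holds by the definition of $\pi_I$ as a retraction onto $A_0$, so that multiplying by it never raises degree. Note also that the statement is about the actual product in $A$, not the tensor, so no well-definedness issue for $\psi^{(n)}$ arises at this stage.
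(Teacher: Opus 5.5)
Your proof is correct and matches the paper's argument. Both substitute $\psi_I(a_j) = a_j - \pi_I(a_j)$ with $\pi_I(a_j) \in A_0$ and observe that every term other than $a_1 \cdots a_n$ contains a degree-$0$ factor, so it lands in $A_{<n}$; the paper expands the whole product at once, whereas your induction on $n$ does the same bookkeeping one factor at a time.
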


\begin{proof}
    Recall that $\psi_I(a) = a - \pi_I(a)$ with $\pi_I(a) \in A_0$ and $a \in A_1$. Therefore, expanding the product $\prod_{i=1}^n \psi_I(a_i)$ gives the highest degree term $\prod_{i=1}^n a_i$ of degree $n$. All other terms contain at least one factor $\pi_I(a_i)$, and hence have degree strictly less than $n$. 
\end{proof}

Next, we show that the filtration by powers of $I$ behaves as expected. 

\begin{Lem}\label{Lem: Filtration comparison}
    For every $n \geq 1$, the following hold:
    \begin{enumerate}
        \item As ideals, we have $I^n = \psi_I(A_1)^{n} + I^{n+1}$.
        \item As an algebra, we have $A = A_{< n} + I^{n}.$
    \end{enumerate}
\end{Lem}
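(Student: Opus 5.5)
Part (1) follows from Step 2 of \Cref{Pro: psi is epi onto J/J^2} and an induction on $n$; part (2) is then an induction on $n$ that uses (1) together with \Cref{Lem: psi product top degree}. Here $\psi_I(A_1)^n$ denotes the span of the products $\psi_I(a_1)\cdots\psi_I(a_n)$ with $a_i \in A_1$, which is the image of $\psi^{(n)}$.

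For (1), the inclusion $\supseteq$ is clear, because $\psi_I(A_1) \subseteq I$ by \Cref{Pro: Split ideal properties}. For $\subseteq$ we induct on $n$.

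\begin{itemize}
\item \emph{Base case $n=1$.} This is exactly the surjectivity of $\overline{\psi_I}$ from \Cref{Pro: psi is epi onto J/J^2}: we have $I = \psi_I(A_1) + I^2$.
\item \emph{Inductive step.} Assume $I^n = \psi_I(A_1)^n + I^{n+1}$. Then
\[ I^{n+1} = I^n \cdot I \subseteq \bigl(\psi_I(A_1)^n + I^{n+1}\bigr)\bigl(\psi_I(A_1) + I^2\bigr) \subseteq \psi_I(A_1)^{n+1} + I^{n+2}, \]
since every cross term already lies in $I^{n+2}$.
\end{itemize}

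The only subtlety is that $I^n$ is spanned by products $x_1\cdots x_n$ with $x_i \in I$, not merely by sums. The displayed computation handles this, since we may substitute $x_i = \psi_I(m_i) + y_i$ with $y_i \in I^2$ into each factor and expand multilinearly.

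For (2) we also induct on $n$.

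\begin{itemize}
\item \emph{Base case $n=1$.} Since $I$ is split, $A = A_0 \oplus I$, and $A_0 = A_{<1}$.
\item \emph{Inductive step.} Assume $A = A_{<n} + I^n$. By (1), $I^n = \psi_I(A_1)^n + I^{n+1}$. By \Cref{Lem: psi product top degree}, each product satisfies $\psi_I(a_1)\cdots\psi_I(a_n) \in a_1\cdots a_n + A_{<n} \subseteq A_{\leq n}$, so $\psi_I(A_1)^n \subseteq A_{\le n} = A_{<n+1}$. Combining these gives
\[ A = A_{<n} + \psi_I(A_1)^n + I^{n+1} \subseteq A_{<n+1} + I^{n+1}, \]
which is the claim for $n+1$.
\end{itemize}

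Note that only the upper bound $\psi_I(A_1)^n \subseteq A_{\le n}$ is needed here, not the exact form of the top degree part. Both parts are formal once \Cref{Pro: psi is epi onto J/J^2} is available. The main point of care is the multilinear expansion in (1), and it is harmless because $I$ is a two-sided ideal, so $I^a I^b \subseteq I^{a+b}$.
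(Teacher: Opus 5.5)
Your proof is correct and matches the paper's argument: (1) is an induction starting from the surjectivity of $\overline{\psi_I}$ and expanding $(\psi_I(A_1)+I^2)$ against the inductive hypothesis, and (2) is an induction using (1) together with $\psi_I(A_1)^n \subseteq A_{\leq n}$ from \Cref{Lem: psi product top degree}. Your final line correctly ends in $A_{<n+1} + I^{n+1}$, whereas the paper's displayed chain has a harmless exponent typo at that point.
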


\begin{proof}
    We prove both statements by induction. 
    \begin{enumerate}
        \item When $n=1$, the fact that $I = \psi(A_1) + I^2$ follows from \Cref{Pro: psi is epi onto J/J^2}. For the induction, assume now that $I^{n-1} = \psi_I(A_1)^{n-1} + I^{n}$. Then we have 
    \begin{align*}
        I I^{n-1} &= I(\psi_I(A_1)^{n-1} + I^n) \\
        &= (\psi_I(A_1) + I^2)(\psi_I(A_1)^{n-1} + I^n) \\
        &= \psi_I(A_1)\psi_I(A_1)^{n-1} + I^2\psi_I(A_1)^{n-1} + \psi_I(A_1)I^n + I^2 I^n \\
        &= \psi_I(A_1)^{n} + I^2\psi_I(A_1)^{n-1} + \psi_I(A_1)I^n + I^{n+2},
    \end{align*}
    where the last three summands are all contained in $I^{n+1}$ because $\psi_I(A_1) \subseteq I$. Thus we have $I^n \subseteq \psi_I(A_1)^n + I^{n+1}$, and the reverse inclusion is obvious.
    
    \item When $n=1$, the fact that $A = A_0 + I$ follows from \Cref{Pro: I and K are split}. 
    For the induction, assume now that $A = A_{< n-1} + I^{n-1}$. Note that by \Cref{Lem: psi product top degree} we have 
    \[ \psi_I(A_1)^{n-1} \subseteq A_{n-1} + A_{<n-1} = A_{\leq n-1} = A_{< n}. \]
    This, together with the first statement for $I^{n-1}$, shows that
    \begin{align*}
        A = A_{< n-1} + I^{n-1} & = A_{< n-1} + \psi_I(A_1)^{n-1} + I^{n} \\ 
        &\subseteq  A_{< n-1} + A_{< n} + I^{n} = A_{<n} + I^{n+1}.
    \end{align*}  
    The reverse inclusion is obvious. \qedhere
    \end{enumerate}
\end{proof}

We now show that each of the above sums is direct, and produce the homogeneous components of the graded isomorphism. Recall that $I = \varphi^{-1}(B_+)$ and $K = \varphi(A_+)$ by definition, and recall our notation $\mu_n \colon A^{(n)} \to A_n$ for the multiplication map.

\begin{Pro}\label{Pro: Graded pieces}
    With the same assumptions, we have for every $n \geq 1$ the following.
    \begin{enumerate}
        \item The intersection $A_{<n} \cap I^n = 0$ and $B_{<n} \cap K^n = 0$.
        \item For $a \in A_n$, denote by $\hat{a} \in \mu_n^{-1}(a)$ an arbitrary preimage. Then the assignment 
        \[\hat{\psi}^{(n)} \colon A_n \to I^n, \; a \mapsto \psi^{(n)}(\hat{a})  \]
        gives a well-defined linear map. 
        This induces a well-defined linear bijection 
        \[ \psi_n \colon A_n \to B_n, \; a \mapsto \proj_n^B(\varphi(\hat{\psi}^{(n)}(a))).  \]
    \end{enumerate}
\end{Pro}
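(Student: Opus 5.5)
The plan is to prove (1) by a dimension count, and then to get (2) from (1) together with \Cref{Lem: psi product top degree}.

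For (1), local finiteness makes every space involved finite dimensional. By \Cref{Lem: Filtration comparison}(2) we have $A = A_{<n} + I^n$, so $\dim(A/I^n) \leq \dim(A_{<n})$. On the other hand, $\varphi$ induces an isomorphism $A/I^n \simeq B/B_+^n$, and $B/B_+^n \simeq B_{<n}$ because $B_+^n = B_{\geq n}$. Hence $\dim(B_{<n}) \leq \dim(A_{<n})$. The symmetric statement for $K$ in $B$ (the same lemma applied to $\varphi^{-1}$) gives $B = B_{<n}+K^n$ and $B/K^n \simeq A/A_+^n \simeq A_{<n}$, so $\dim(A_{<n}) \leq \dim(B_{<n})$. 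Thus all these dimensions agree. In particular $\dim(A/I^n) = \dim(A_{<n})$, and so the surjection $A_{<n} \to A/I^n$ is injective, which means $A_{<n}\cap I^n = 0$. The same argument applies to $B$. As a by-product, taking differences of the equalities $\dim A_{<n}=\dim B_{<n}$ for $n$ and $n+1$ gives $\dim(A_n) = \dim(B_n)$.

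For (2), I first check that $\psi^{(n)}$ is well defined on $A_1^{(n)}$. This holds because $\psi_I$ is an $A_0$-bimodule map by \Cref{Pro: Split ideal properties}, so the multilinear map $(a_1,\dots,a_n)\mapsto \psi_I(a_1)\cdots\psi_I(a_n)$ is $A_0$-balanced. Its image lies in $I^n$ by \Cref{Pro: Split ideal properties}(3). Next, independence of the preimage $\hat a$ reduces by linearity to the following claim: if $x\in A_1^{(n)}$ satisfies $\mu_n(x)=0$, then $\psi^{(n)}(x)=0$. By \Cref{Lem: psi product top degree}, extended linearly to sums of pure tensors, we have $\psi^{(n)}(x)-\mu_n(x)\in A_{<n}$. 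So $\psi^{(n)}(x)\in A_{<n}\cap I^n = 0$ by (1). Linearity of $\hat\psi^{(n)}$ then follows by choosing preimages additively.

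Since $\varphi(I^n) = B_+^n = B_{\geq n}$, projecting to degree $n$ gives a linear map $\psi_n\colon A_n\to B_n$. The main step is injectivity. Suppose $\psi_n(a)=0$. Then $\varphi(\hat\psi^{(n)}(a))\in B_{\geq n+1}=B_+^{n+1}$, hence $\hat\psi^{(n)}(a)\in I^{n+1}$. By \Cref{Lem: psi product top degree} we can write $\hat\psi^{(n)}(a) = a + d$ with $d\in A_{<n}$, so $\hat\psi^{(n)}(a)\in A_{<n+1}\cap I^{n+1}$. This intersection is zero by (1) applied to $n+1$. Therefore $a=-d\in A_n\cap A_{<n}=0$. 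Surjectivity then follows from $\dim A_n=\dim B_n$, which was obtained in the first paragraph.

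The delicate point is exactly this use of (1) at level $n+1$ rather than $n$. That is why (1) must be established for all $n$ simultaneously before (2) is proved.
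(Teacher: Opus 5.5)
Your proof is correct, but it is organised differently from the paper's.

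The paper runs a joint induction on $n$. In the inductive step it writes $\dim(A/I^n)=\sum_{j<n}\dim(I^j/I^{j+1})=\sum_{j<n}\dim B_j$, and then needs $\dim A_j=\dim B_j$ for $j<n$, which it takes from the bijectivity of $\psi_j$ in the induction hypothesis. For (2) it proves \emph{surjectivity} of $\psi_n$ from $I^n=\psi_I(A_1)^n+I^{n+1}$ (part (1) of \Cref{Lem: Filtration comparison}). It then builds the mirror map $\psi_n'\colon B_n\to A_n$ from $\psi_K$, shows that it is surjective too, and concludes by comparing dimensions.

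You instead decouple (1) from (2). The two inequalities $\dim B_{<n}=\dim(A/I^n)\le\dim A_{<n}$ and $\dim A_{<n}=\dim(B/K^n)\le\dim B_{<n}$ come from part (2) of \Cref{Lem: Filtration comparison} applied to $I$ and to $K$. They give (1) for every $n$ at once, and they give the equality $\dim A_n=\dim B_n$ before any $\psi_n$ is constructed. You then prove \emph{injectivity} of $\psi_n$, using (1) at level $n+1$ together with \Cref{Lem: psi product top degree}, and surjectivity follows from the dimension equality.

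What your route buys: it removes the joint induction and the need for $\psi_n'$. Looking one degree ahead is harmless because your (1) is established outright. It also makes clear that the only inputs are local finiteness and the filtration comparison for both ideals. What the paper's route buys: it shows directly that $\psi_I(A_1)^n$ spans $I^n/I^{n+1}$, whereas yours shows that $\hat\psi^{(n)}$ induces an embedding $A_n\hookrightarrow I^n/I^{n+1}$.

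Your explicit check that $(a_1,\dots,a_n)\mapsto\psi_I(a_1)\cdots\psi_I(a_n)$ is $A_0$-balanced fills in a point the paper leaves implicit. That check is what makes $\psi^{(n)}$ well defined on $A_1^{(n)}$.
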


\begin{proof}
    We proceed by induction. In each step, we need the bijectivity of $\psi_j$ for all $j <n$ to establish the statement for $A_{<n} \cap I^n$, and then use this statement to prove the statement for $\psi_n$. 
    
    \emph{Base case:} When $n = 1$, statement (1) follows from \Cref{Pro: I and K are split}. For statement (2), recall that $\overline{\psi_I} \colon A_1 \to I/I^2$ is bijective by \Cref{Pro: I and K are split}. The map $\psi_1$ is simply the composition of $\overline{\psi_I}$ with the isomorphism $I/I^2 \to B_1$ induced by $\varphi$, because the intermediate $\mu_1$ is the identity.

    \emph{Induction:} Assume that both statements hold for all $1 \leq j < n$. In particular, it follows from the bijectivity of $\psi_j$ and \Cref{Pro: phi_0 is iso} that $\dim(A_j) = \dim(B_j)$ for $0 \leq j < n$. For statement (1), note that by \Cref{Lem: Filtration comparison}, we have that $A_{<n} \to A/I^n$ is surjective. Now it suffices to compare dimensions along the filtration by powers of $I$. Write $I^0 = A$, then we have 
    \begin{align*}
        \dim(A/I^n) = \sum_{j = 0}^{n-1} \dim(I^j/I^{j+1}) = \sum_{j = 0}^{n-1} \dim(B_j) = \sum_{j = 0}^{n-1} \dim(A_j) = \dim(A_{<n}).  
    \end{align*}
    Thus, the surjection $A_{<n} \to A/I^n$ has to be a bijection, and hence $A_{<n} \cap I^n = 0$. The corresponding statement for $B$ is proven in the same way. 
    
    For statement (2), we first show that $\hat{\psi}^{(n)}$ is well-defined. The first step is to note that $\mu_n \colon A^{(n)} \to A_n$ is surjective, and hence taking a preimage $\hat{a}$ for every $a \in A_n$ is well-defined. Next, we show that the choice of preimage does not matter. For this, let $r \in \Ker(\mu_n)$, and write it as a sum $r = \sum_{i} a_{i,1} \otimes \cdots \otimes a_{i,n}$. 
    Applying $\psi^{(n)}$ gives 
    \begin{align*}
        \psi^{(n)}(r) = \sum_i \psi_I(a_{i,1}) \cdots \psi_I(a_{i,n}) = \sum_i a_{i,1} \cdots a_{i,n}  + b_i 
    \end{align*}
    where the $b_i$ are in $A_{<n}$ by \Cref{Lem: psi product top degree}. Since $r \in \Ker(\mu_n)$, it follows that $\sum_i a_{i,1} \cdots a_{i,n} = 0$, and hence $\psi^{(n)}(r) \in A_{<n}$. However, by \Cref{Pro: Split ideal properties} applied to $\psi_I$, we get that $\psi^{(n)}(r) =\sum_i \psi_I(a_{i,1}) \cdots \psi_I(a_{i,n})  \in I^n$. By statement (1), we have that $A_{<n} \cap I^n = 0$, and hence $\psi^{(n)} (r) = 0$. Thus, $\hat{\psi}^{(n)}$ is well-defined. To check that the resulting $\psi_n = \proj_n^B \circ \; \varphi \circ \hat{\psi}^{(n)}$ is bijective, note that $\Im(\hat{\psi}^{(n)}) = \Im(\psi^{(n)}) = \psi(A_1)^n $. By \Cref{Lem: Filtration comparison}, we have that $I^n = \psi_I(A_1)^n + I^{n+1} = \Im(\hat{\psi}^{(n)}) + I^{n+1}$, and therefore the composition 
    \[ A_n \xrightarrow{\hat{\psi}^{(n)}} I^n \to I^n/I^{n+1} \to B_+^n/B_+^{n+1} \simeq B_n  \]
    is surjective. But this composition is simply $\psi_n$, and playing the argument backwards shows that $\psi'_n \colon B_n \to A_n$, constructed from $\psi_K$, is also surjective. Comparing dimensions then shows that $\psi_n$ and $\psi_n'$ are bijections. \qedhere   
\end{proof}

It remains to check that the collection $(\varphi_0, \psi_1, \ldots)$ indeed constitutes an isomorphism of graded algebras, not just of graded vector spaces. To complete the notation, we write $\hat{\psi}^{(0)} = \operatorname{id}_{A_0}$ and $\psi_0 = \varphi_0$, as well as $A_1^{(0)} = A_0$ and $\mu_0 = \operatorname{id}_{A_0}$. 

\begin{Pro}\label{Pro: Multiplicativity}
    With the same assumptions, the following holds for all $n, n' \geq 0$ and $a \in A_n$, $a' \in A_{n'}$.  
    \begin{enumerate}
        \item In $A$, we have $\hat{\psi}^{(n)}(a) \hat{\psi}^{(n')}(a') = \hat{\psi}^{(n+n')}(aa') $.
        \item In $B$, we have that $\psi_n(a) \psi_{n'}(a') = \psi_{n+n'}(aa') $.
    \end{enumerate}
\end{Pro}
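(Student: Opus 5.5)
The plan is to prove (1) by lifting to the tensor algebra, and then obtain (2) from (1) by applying $\varphi$ and taking top-degree components. Throughout, $\psi^{(n)}$ is multiplicative on tensors by construction, and $\hat\psi^{(n)}$ is well defined by \Cref{Pro: Graded pieces}; the only subtlety is the degree $0$ factors.

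For (1), first take $n, n' \geq 1$. Choose preimages $\hat a \in A_1^{(n)}$ of $a$ and $\hat a' \in A_1^{(n')}$ of $a'$. Then $\hat a \otimes \hat a' \in A_1^{(n+n')}$ is a preimage of $aa'$ under $\mu_{n+n'}$, since $\mu$ is multiplication. By the definition of $\psi^{(n)}$ on elementary tensors,
\[ \psi^{(n+n')}(\hat a\otimes\hat a') = \psi^{(n)}(\hat a)\,\psi^{(n')}(\hat a'). \]
Because $\hat\psi^{(n+n')}$ does not depend on the choice of preimage, this gives the claim.

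Next, the cases where one degree is $0$. Take $n=0$ and $a \in A_0$. If $n' = 0$ the claim is trivial. If $n' \geq 1$, write $\hat a' = \sum a_1 \otimes \cdots \otimes a_{n'}$. Then $a\hat a' = \sum (aa_1)\otimes\cdots$ is a preimage of $aa'$. Since $\psi_I$ is an $A_0$-bimodule map by \Cref{Pro: Split ideal properties}, we get $\psi_I(aa_1) = a\psi_I(a_1)$, and hence $\hat\psi^{(n')}(aa') = a\,\hat\psi^{(n')}(a')$. The case $n' = 0$ is symmetric. The tensor product is taken over $A_0$, and this is compatible with the bimodule property of $\psi_I$.

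For (2), apply $\varphi$ to (1):
\[ \varphi(\hat\psi^{(n)}(a))\,\varphi(\hat\psi^{(n')}(a')) = \varphi(\hat\psi^{(n+n')}(aa')). \]
The key point is that $\hat\psi^{(n)}(a) \in I^n$, so $x = \varphi(\hat\psi^{(n)}(a)) \in B_+^n = B_{\geq n}$. Hence $x = \psi_n(a) + (\text{terms of degree} > n)$, where for $n=0$ we read this as $\proj_0^B(\varphi(a)) = \varphi_0(a) = \psi_0(a)$. Similarly $y = \varphi(\hat\psi^{(n')}(a')) = \psi_{n'}(a') + B_{>n'}$.

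Multiplying, $xy = \psi_n(a)\psi_{n'}(a') + B_{>n+n'}$, and taking $\proj^B_{n+n'}$ gives $\psi_{n+n'}(aa') = \psi_n(a)\psi_{n'}(a')$. The main step I expect to need care is justifying that $\varphi(I^n) \subseteq B_{\geq n}$. This holds since $\varphi(I) = B_+$ and, by the earlier Remark, $B_+^n = B_{\geq n}$ for standard gradings. Thus all lower-degree cross terms vanish and only the top components multiply.
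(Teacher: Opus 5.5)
Your proposal is correct and follows essentially the same route as the paper: (1) uses $\hat a\otimes\hat a'$ as a preimage of $aa'$ together with the fact that $\hat{\psi}^{(n+n')}$ does not depend on the choice of preimage, and (2) applies $\varphi$, splits off the top-degree components and projects to degree $n+n'$. You also spell out two points the paper leaves implicit: the degree-$0$ factors, handled via the $A_0$-bimodule property of $\psi_I$, and the inclusion $\varphi(I^n)\subseteq B_{\geq n}$. You correctly place the cross terms in $B_{>n+n'}$, where the paper writes $B_{\geq n+1}$.
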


\begin{proof}
    We fix preimages $\hat{a} \in \mu_n^{-1}(a)$ and $\hat{a}' \in \mu_{n'}^{-1}(a')$. 
    \begin{enumerate}
        \item With these preimages, we have 
    \[ \psi_I^{(n)}(\hat{a}) \psi_I^{(n')}(\hat{a}') = \psi_I^{(n+n')}(\hat{a} \otimes \hat{a}').  \]
    Since $\hat{a} \otimes \hat{a} \in \mu_{n+n'}^{-1}(aa')$, we find that 
    \[ \quad \hat{\psi}^{(n)}(a) \hat{\psi}^{(n')}(a') =  \psi_I^{(n)}(\hat{a}) \psi_I^{(n')}(\hat{a}') = \psi_I^{(n+n')}(\hat{a} \otimes \hat{a}') = \hat{\psi}^{(n+n')}(aa'). \]
    \item We keep the same notation as before. Then we need to apply $\varphi$ and project to degree $n$, i.e. we write 
    \[ \varphi(\hat{\psi}^{(n)}(a)) = \psi_n(a) + \tau_{ n}(a) \]
    for $\tau_{n}(a) \in B_{> n} = B_+^{n+1}$, and similarly for $a'$. Applying $\varphi$ to the identity from the first step and expanding, we find 
    \begin{align*}
        \varphi \left(\hat{\psi}^{(n)}(a) \hat{\psi}^{(n')}(a') \right) &= \varphi(\hat{\psi}^{(n)}(a)) \cdot  \varphi( \hat{\psi}^{(n')}(a')) \\
        &= (\psi_n(a) + \tau_{n}(a)) \cdot  (\psi_{n'}(a') + \tau_{ n'}(a')) \\
        &= \psi_n(a)\psi_{n'}(a') \\
        &+ \tau_{n}(a)\psi_{n'}(a') + \psi_n(a)\tau_{ n'}(a') + \tau_{n}(a) \tau_{n'}(a').
    \end{align*}
    Then it remains to note that the last three summands lie in $B_{\geq n+1}$ and hence vanish under $\proj_{n+n'}^B$. Thus, we have 
    \[ \psi_{n+n'}(aa') = \proj_{n+n'}\left(\varphi \left(\hat{\psi}^{(n)}(a) \cdot  \hat{\psi}^{(n')}(a') \right) \right) = \psi_n(a) \psi_{n'}(a'). \qedhere \]
    \end{enumerate}
\end{proof}

\begin{proof}[Proof of \Cref{Thm: Main thm}]
    By \Cref{Pro: Graded pieces}, the map $\Psi = \bigoplus_{i \geq 0} \psi_i$ is a graded $k$-linear bijection $A \to B$ with $\Psi(A_i) = B_i$. Furthermore, by \Cref{Pro: Multiplicativity}, it respects mutliplication, and since $\psi_0 = \varphi_0$ is an algebra homomorphism, $\Psi$ is an isomorphism of graded unital $k$-algebras. 
\end{proof}

We note the an immediate application. The following version of Zariski cancellation is a straighforward generalisation of \cite{BellZhang} and \cite{Gaddis}. 

\begin{Thm}
    Let $A_\bullet$ and $B_\bullet$ be semiconnected standard graded locally finite algebras. If $\operatorname{Z}(A) \cap A_1 = 0$ and 
    \[ A[t_1, \cdots, t_n ] \simeq B[t_1, \cdots, t_n] \]
    as ungraded algebras, then $A \simeq B$. 
\end{Thm}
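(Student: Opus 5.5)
The plan is to reduce to \Cref{Thm: Main thm} by making $A[t_1,\dots,t_n]$ and $B[t_1,\dots,t_n]$ into semiconnected standard graded locally finite algebras, and then to cancel the extra variables using the condition on the center. First I put each $t_i$ in degree $1$. Then $A[t]_\bullet = A_\bullet\otimes_k k[t_1,\dots,t_n]_\bullet$ has degree $0$ part $A_0$, which is semisimple. It is generated by $A_0\cup A_1\cup\{t_i\}$, and each graded piece is finite dimensional. The same holds for $B[t]$. By \Cref{Thm: Main thm}, the ungraded isomorphism therefore gives a graded isomorphism
\[
\Phi\colon A[t]_\bullet\to B[t]_\bullet .
\]

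Next I recover $A$ and $B$ from these graded algebras. The degree $1$ part is $A[t]_1 = A_1\oplus\bigoplus_i k t_i$. Let $Z_1(A[t]) = \operatorname{Z}(A[t])\cap A[t]_1$ be the central elements of degree $1$. I compute it: an element $m+\sum c_i t_i$ with $m\in A_1$ is central exactly when $m$ is central in $A$, because the $t_i$ are central and commute with everything. Since $\operatorname{Z}(A)\cap A_1=0$, we get $Z_1(A[t]) = \operatorname{span}(t_1,\dots,t_n)$. Quotienting out the ideal generated by these elements gives
\[
A[t]/(Z_1(A[t])) = A .
\]
The graded isomorphism $\Phi$ sends $Z_1(A[t])$ onto $Z_1(B[t])$, since it preserves both the center and the degree. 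Hence $A\simeq B[t]/(Z_1(B[t]))$, and this quotient is still graded.

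It remains to identify $B[t]/(Z_1(B[t]))$ with $B$, and this is the main obstacle: the hypothesis on the center is imposed only on $A$. Write $W = \operatorname{Z}(B)\cap B_1$. Then $Z_1(B[t]) = W\oplus\operatorname{span}(t_i)$, so $B[t]/(Z_1(B[t])) \simeq B/(W)$. A dimension count settles the question. Since $\Phi$ is bijective in degree $1$, we have $\dim Z_1(A[t]) = \dim Z_1(B[t])$, that is, $n = \dim W + n$. Therefore $W=0$ and $B/(W)=B$, which gives $A\simeq B$, in fact as graded algebras.

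The one point needing care is the computation of the degree $1$ center. Centrality of $m+\sum c_it_i$ in $A[t]$ is equivalent to centrality of $m$ in $A$, because the $t_i$ commute with all of $A[t]$. Local finiteness makes all the dimensions involved finite, so the count is legitimate. Applying the same reasoning with the roles of $A$ and $B$ swapped shows that the hypothesis on $A$ forces the corresponding property for $B$.
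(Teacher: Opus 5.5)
Your proof is correct, and it is the Bell--Zhang/Gaddis cancellation argument that the paper invokes without writing out: you grade the polynomial extensions with each $t_i$ in degree $1$, apply \Cref{Thm: Main thm}, and recover $A$ and $B$ as quotients by the ideal generated by the degree-one central elements, and your count $n = \dim W + n$ correctly handles the fact that the centre hypothesis is imposed only on $A$. Your closing remark about swapping the roles of $A$ and $B$ is unnecessary, since that dimension count already gives $W = 0$.
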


\section{Further questions and observations}
We conclude by pointing out some natural questions arising from the above arguments. These questions range in difficulty and technicality, but are all centered around similar observations concerning two gradings on isomorphic algebras. 

The first natural question to ask is how exactly the investigation of $A_+$ and $I$ fits into the framework of \cite{BellZhang, Gaddis}. In the following, we simply refer to dimension over $k$ as in \cite{BellZhang}, but of course the statements may, and probably should, be refined to dimension vectors or similar as in \cite{Gaddis}. We used fundamentally that $I$ is an ideal of codimension $\dim(A_0)$ and tangent dimension $\dim(A_1)$. However, it is not clear that every codimension $\dim(A_0)$ ideal in $A$ is split. 

\begin{Ques}
    Let $A_\bullet$ be standard graded, locally finite and semiconnected. Let $J \trianglelefteq$ be an ideal with $\dim(A/J) = \dim(A_0)$. Is $J$ necessarily split?
\end{Ques}
Furthermore, we do not fully understand how the role of $I$ relates to the role of the Jacobson radical of prescribed co- and tangent dimension in \cite{BellZhang, Gaddis}. 

A related but distinct question arises when focusing on the subalgebra $A_0$ that is complemented by $I$, rather than the ideal $I$. Recall that $\varphi_0$ was used to produce the isomorphism $A_0 \simeq B_0$, and that we used that for an idempotent $e \in A$, we have that $\varphi_0(e)$ is again idempotent. Denote by $\{f_1, \ldots, f_m\}$ a complete set of primitive idempotents for $B_0$ and by $e_i = \varphi^{-1}(f_i)$ their preimages in $A$. \Cref{Thm: Main thm} then shows that we can automorphically map the set $\{ e_1, \ldots, e_m\}$, which consists of inhomogeneous elements, to their degree $0$ parts $\{ (e_1)_0, \ldots, (e_m)_0\} $ in $A_0$. For the purposes of studying graded projectives and graded simples, it is natural to ask whether one needed to start with $\{e_1, \ldots, e_m\} $ being homogeneous for some grading, and whether one can relax some assumptions on the grading. Note that a version of this question was asked by the author on mathoverflow \cite{Overflow}. 

\begin{Ques}
    Let $A_\bullet$ be a locally finite standard graded semiconnected algebra, and $\{ e_1, \cdots, e_m \}$ a complete set of primitive, pairwise orthogonal idempotents in $A$. Does there exist an automorphism that takes $\{ e_1, \cdots, e_m \}$ to $\{ (e_1)_0, \cdots, (e_m)_0 \}$? If yes, is the same true for $A_\bullet$ being nonnegatively graded and locally finite? 
\end{Ques}

In the spirit of relaxing assumptions, let us discuss what happens when we try to remove or weaken any of the assumptions from \Cref{Thm: Main thm}. We first consider local finiteness and the standardness of the grading. 

\begin{Rem}
    \begin{enumerate}
        \item If we drop the assumption that $A_\bullet$ and $B_\bullet$ are standard graded, then \Cref{Thm: Main thm} fails for trivial reasons. Take e.g. $A_\bullet = k[x]$ the polynomial ring with $x$ in degree $1$, and $B_\bullet = k[y]$ the polynomial ring with $y$ in degree $2$. Clearly $A_\bullet$ and $B_\bullet$ can not be graded isomorphic. However, after scaling down the grading on $B_\bullet$, we do get a graded isomorphism. This suggests that to even generalise the statement meaningfully, we need to relax the notion of graded isomorphism to allow for such kinds of scaling of the grading. 
        \item If we drop the assumption that $A_\bullet$ and $B_\bullet$ are locally finite, every step of the proof of \Cref{Thm: Main thm} fails. We used in each step that our constructed morphism $A_n \to B_n$ is surjective (except for $n=0$, where we got injectivity), and then compared dimensions to conclude that it needs to be bijective. While the arguments still show equality of cardinals $\dim(A_n) = \dim(B_n)$ (assuming an isomorphism in lower degrees and using the axiom of choice), this does not imply that $\psi_n$ needs to be bijective. However, we are not aware of a counterexample in this case either. 
    \end{enumerate}
\end{Rem}

We believe the most interesting direction arises when dropping the semiconnectedness, but keeping the other two assumptions. Then an isomorphism $A \simeq B$ does no longer imply the existence of a graded isomorphism. 

\begin{Exp}
    Let $\Pi(Q)$ be the classical preprojective algebra of an extended Dynkin quiver. It is well-known that $\Pi(Q)$ equipped with the path-length grading is Koszul, see for example \cite{GrantIyama}, and hence locally finite and even semiconnected. However, for the preprojective grading, $\Pi(Q)$ is still generated in degrees $0$ and $1$, and locally finite, but not semiconnected, and hence the two graded algebras can not be graded isomorphic. 
\end{Exp}

The above example, and many similar computations, suggest the following. Since $A_\bullet$ and $B_\bullet$ are locally finite and standard graded, the algebras $A/A_+^2$ and $B/B_+^2$ are finite dimensional. They are in general not isomorphic, but they retain some information about the generators for $A$ and $B$, which in many cases can be seen as ``quivers for $A$ and $B$''.  We therefore raise the following question. 

\begin{Ques}
    Let $A_\bullet$ and $B_\bullet$ be locally finite and standard graded, over an algebraically closed field $k$. If $A \simeq B$ as ungraded algebras, do the finite dimensional algebras $A/A_+^2$ and $B/B_+^2$ have isomorphic Gabriel quivers?
\end{Ques}

If the answer to the above question is positive, one may hope for something stronger: 

\begin{Ques}\label{Ques: Bigrading up to aut?}
    Let $A_\bullet$ and $B_\bullet$ be locally finite and standard graded. If $A \simeq B$, does there exist an isomorphism $\psi \colon A \to B$ such that the induced grading $B = \bigoplus_{i \geq 0} \psi(A_i)$ and $B_\bullet$ form a $\mathbb{Z}^2$-grading?
\end{Ques}

Let us point out that removing both semiconnectedness and local finiteness simultaneously is not possible, even in the commutative case. We present here an example we learned from Isac Hedén which shows not just the failure of \Cref{Thm: Main thm} but an even stronger incompatibility akin to \Cref{Ques: Bigrading up to aut?}. We phrase as much as possible in terms of graded algebras, but encourage the geometrically minded reader to think of $\mathbb{G}_m$-actions and their fixed points on affine varieties instead.  

\begin{Exp}\label{Exp: Isacs example}
    For this example the ground field is $k = \mathbb{C}$. By \cite{Dubouloz}, there exist irreducible affine surfaces $X$ and $Y$ with $\Aut(X)$ finite and such that $X \not \simeq Y$ but $X \times \mathbb{A}^2 \simeq Y \times \mathbb{A}^2$. That means, we have $\mathbb{C}$-algebras $R$ and $S$ such that $R \not \simeq S$ but $R[s,t] \simeq S[u,v]$. Grade the polynomial rings by putting $s$ and $t$ in degree $1$ and similarly $u$ and $v$ in degree $1$. We denote these graded algebras now as $A_\bullet = R[s,t]$ and $B_\bullet = S[u,v] $. Then the degree $0$ parts are $A_0 = R$ and $B_0 = S$ respectively, and hence no graded isomorphism can exist. 
    In fact, these gradings, no matter which isomorphism we use to transport them to the same ring, do not form a $\mathbb{Z}^2$-grading. For a contradiction, suppose that they did. More precisely, fix an arbitrary ungraded isomorphism $\varphi \colon A \to B$, and transport the grading $B_{\bullet} = S[u,v,]$ back to $A_\bullet$, and write $A^{\bullet} = \bigoplus_{i \geq 0} \varphi^{-1}(B_i)$ for this grading. Clearly, we have $A_0 = R = \mathbb{C}[X]$ the coordinate ring of $X$. If $A^\bullet$ and $A_\bullet$ would form a $\mathbb{Z}^2$-grading, we would get a grading on $R = A_0$ as $R = \bigoplus_{i \geq 0} (A_0 \cap A^i)$. However, this is nothing but a $\mathbb{C}^\ast$-action on $\spec(R) = X$. Since $\Aut(X)$ is finite, such an action is trivial, and hence the grading is trivial, meaning that $R \subseteq R \cap A^0 \subseteq A^0$. Furthermore, $A^0 = \varphi^{-1}(B_0) \simeq S$ is isomorphic to the coordinate ring of the surface $Y = \spec(S)$. This means that $X$ contains an irreducible surface isomorphic to $Y$. Since $Y$ is also irreducible, it follows that $X \simeq Y$, contradicting the assumption. 
\end{Exp}

We therefore conclude with the following reformulation of \Cref{Ques: Bigrading up to aut?} in the commutative case, as it may also be of interest to the study of automorphism groups of affine varieties. For this topic, we refer the reader to \cite{KraftAutomorphisms}.  

\begin{Ques}
    Let $X$ and $Y$ be affine varieties over $\mathbb{C}$, equipped with $\mathbb{C}^\ast$-actions $\alpha \colon \mathbb{C}^\ast \to \Aut(X) $ and $\beta \colon \mathbb{C}^\ast \to \Aut(Y) $ with strictly positive weights and finitely many fixed points. If $X \simeq Y$, does there exist an isomorphism $\varphi \colon X \to Y$ such that $\Im(\alpha)$ and $\varphi^{-1}(\Im(\beta))$ are conjugate in $\Aut(X)$?
\end{Ques}

\section*{Acknowledgement}
We thank Jason Gaddis for helpful correspondence. We thank Isac Hedén for discussions on automorphisms of affine varieties, and for explaining \Cref{Exp: Isacs example} to us.

\printbibliography

\end{document}